\numberwithin{equation}{section}
\shorttitle{A Double Secretary Problem} 
\begin{document}

\title{A Double Secretary Problem} 

\authorone[Department of Mathematics, National Changhua University of Education]{Shoou-Ren Hsiau} 
\authortwo[Department of Mathematics, Victoria Academy]{Yi-Shen Lin} 

\addressone{No. 1, Jin-De Road., Changhua
500, Taiwan, R.O.C.} 
\emailone{srhsiau@cc.ncue.edu.tw} 
\addresstwo{No. 1110, Zhengnan Road, Doulin City, Yunlin County 640, Taiwan, R.O.C.}
\emailtwo{ychyslin@163.com}

\begin{abstract}
We consider a double secretary problem which contains $2n$ applicants of $n$ different qualities, two of each quality. As in the classical secretary problem (CSP),
the applicants are interviewed sequentially in a random order by a manager and
the manager wants to find an optimal stopping rule that maximizes the probability of selecting one of the two best applicants. We prove that the problem
leads to a monotone structure and so the corresponding one-stage look-ahead
rule (which is of threshold type) is optimal. The limiting behavior of the rule and of
the maximum probability are studied and compared with that of the CSP.
\end{abstract}

\keywords{monotone stopping rules; one-stage look-ahead; optimal stopping; secretary problem.}

\ams{60G40}{62L15}    

\section{Introduction} \label{sec1} 
A classical sequential selection problem is the so-called secretary (best-choice) problem
(abbreviated as CSP), which can be stated as follows. Suppose a manager of a company
wants to hire a secretary and knows that there are $n$ (fixed) applicants applying sequentially
in a random order for the job. Assume that they can be classified in a unique order by the manager from the best, $1$, to the worst, $n$. After interviewing an applicant, the manager is only able to observe the applicant's relative rank among those that have been interviewed so far, and the manager must decide either to hire the present applicant (and stop the decision problem) or to reject the applicant and interview further applicants. No recall of preceding applicants is permitted. The optimal stopping rule which maximizes the probability of selecting the best of the $n$ applicants is well known (cf.\ Lindley \cite{LindleyDV1961}) to reject the first $a_n-1$ applicants and then accept the next relatively best applicant, where
\begin{align*}
a_n \;=\; \min\left\{\, i \ge 1 : \sum_{j=i+1}^{n} \frac{1}{j-1} \le 1 \,\right\},
\end{align*}
with the convention that $\sum_{j=\ell}^{m} x_j := 0$ whenever $\ell>m$. Moreover, $a_n/n \to 1/e$ as $n\to\infty$ and the probability of selecting the best applicant using the optimal stopping rule is
\begin{align*}
\frac{a_n-1}{n}\sum_{j=a_n}^{n}\frac{1}{j-1},
\end{align*}
which also converges to $1/e$ as $n\to\infty$.

The literature with respect to the variation of the CSP is vast. Nonetheless, we mention a few topics with some references for the interested reader. Gilbert and Mosteller \cite{Gilbert1966} and Sakaguchi \cite{Sakaguchi1978} studied the model of allowing the manager to choose more than one of the $n$ applicants to obtain the best one. Mucci \cite{Mucci1973a, Mucci1973b} considered an extension of the CSP to general nondecreasing payoff functions of the selected applicant's absolute rank. Chow et al. \cite{chow1964optimal} studied the problem of minimizing the expected rank of the selected applicant. In Tamaki \cite{Tamaki1979}, the manager is allowed to have two choices and the selections are considered success if both the best and the second best applicants are selected. Later, Hsiau and Yang \cite{Hsiau2000} considered the problem with group interviews in which the manager succeeds whenever the selected group contains the best applicant. Szajowski \cite{Szajowski1982} and Lin et al. \cite{Lin2019} considered and solved the problem of optimally selecting the $k$-th best applicant. Another variation introduced by Presman and Sonin \cite{Presman1972} assumes that the (total) number of applicants is a positive integer-valued random variable instead of being a fixed known integer $n$. Bruss \cite{bruss1984unified} proposed a unified approach model in which no assumption of (the distribution of) $n$ is made and the intervention of time is allowed. See Ferguson \cite{Ferguson1989}, Freeman \cite{FreemanPR1983} and Samuels \cite{Samuels1991} for further references. 

The model considered in this paper is described as follows. There are $2n$ ($n$ is known) applicants applying for the secretarial position. Assume the manager can rank the applicants from $1$ (best) to $n$ (worst) with each (absolute) rank being duplicate, i.e.\ the absolute ranks of them are $1,1,2,2,\ldots,n,n$. They are interviewed sequentially in a random order with each of the $(2n)!/2^{n}$ orderings being equally likely. Upon interviewing an applicant, the manager has to decide either to accept the present applicant for the position or to reject the applicant and continue interviewing the next one. During the interview, the manager can rank the applicant among all the preceding ones and aware of whether the applicant's capabilities of being a secretary appears the second time (i.e. the applicant's absolute rank reappears). For example, suppose the absolute ranks of the first four applicants are $3,2,4,2$. When the fourth applicant is being interviewed, the manager knows that the relative rank is $1$ and selecting the fourth applicant is equivalent to selecting the second one. No recall is allowed. The object is to maximize the probability of selecting one of the two best of the $2n$ applicants. In Garrod, Kubicki and Morayne \cite{garrod2012choose}, and Grau Ribas \cite{ribas2019new}, the same problem is addressed in different methods from ours.

The rest of this paper is organized as follows. In Section \ref{sec2}, by adopting the method used in Dynkin \cite{DynkinEB1963}, we prove that the problem leads to a monotone case optimal stopping problem (cf. Chow and Robbins \cite{chow1961martingale} and Ferguson \cite[Chapter 5]{Ferguson2008}) and so the optimal stopping rule is the one-stage look-ahead rule, which is of threshold type with threshold $r_n$. It is shown that the optimal rule stops on the first relatively best applicant whose absolute rank has reappeared and the number of \emph{distinct} absolute ranks among those that have been interviewed is larger than or equal to $r_n$ (cf. Theorem \ref{thm 2.1}). Moreover, we show that $r := \lim_{n\to\infty} r_n/n$ exists and satisfies the equation $r e^{-2/r} = e^{-5}$ (implying that $r \approx 0.470927$). In addition, the maximum probability $p_n$ of selecting one of the two best applicants is obtained and its limiting value is
\[
r+\frac{4}{3r}\left[(1-r)^{3/2}-(1-r)^2\right]\approx 0.767974.
\]
Section \ref{sec3} contains comparisons between our problem and the CSP along with concluding remarks. The proof of some technical materials presented in Section \ref{sec2} is put in Section \ref{sec4}.

\section{The Monotone Case Problem}\label{sec2}
In our model, the decisions are based on the sequential information stemming from the history of relative ranks and the number of distinct absolute ranks. To study this problem, let $R_j$, $j=1,2,\ldots,2n$, be the absolute rank of the $j$-th applicant such that, for every permutation $\sigma_{2n}$ of $(1,1,\ldots,n,n)$,
\begin{equation} \label{2.1}
P\bigl((R_1,\ldots,R_{2n})=\sigma_{2n}\bigr)=\frac{2^{n}}{(2n)!}.    
\end{equation}
For $j=1,2,\ldots,2n$, define $X_j$ to be the number of different values in $\{R_i: R_i\leq R_j,\,1\leq i\leq j\}$, i.e. the relative rank of the $j$-th applicant among the first $j$ applicants. Up to the $j$-th interview, define $D_j$ to be the number of distinct values in $\{R_1,R_2,\ldots,R_j\}$ and $S_j$ indicates the number of times that the relatively best applicant has appeared (once or twice). Let $X'_j$, $j=1,2,\ldots,n$, be the relative rank of the $j$-th \emph{new number}. More precisely, for $j=1,2,\ldots,n$, letting $k_j=\min\{i:\,D_i=j,\,1\leq i\leq 2n\}$, we have
\[
X'_j=X_{k_j}.
\]
For example, if the absolute ranks of the first six applicants are $2,3,1,1,3,4$, then $X_1=1$, $X_2=2$, $X_3=1$, $X_4=1$, $X_5=3$, $X_6=4$, $X'_1=1$, $X'_2=2$, $X'_3=1$ and $X'_4=4$; furthermore, the configurations of the $D$'s and $S$'s are $(D_1,S_1)=(1,1)$, $(D_2,S_2)=(2,1)$, $(D_3,S_3)=(3,1)$, $(D_4,S_4)=(3,2)$, $(D_5,S_5)=(3,2)$ and $(D_6,S_6)=(4,2)$. Since $(R_1,\ldots,R_{2n})$ is exchangeable, it is readily seen that $X'_1,X'_2,\ldots,X'_n$ are independent with $X'_j$ being uniformly distributed on $\{1,2,\ldots,j\}$ (cf.\ Rényi \cite{renyi1962theorie}). We want to find a stopping rule $\tau_n\in\mathcal{M}_n$ such that
\begin{equation} \label{2.2}
P(R_{\tau_n}=1)=\sup_{\tau\in\mathcal{M}_n} P(R_\tau=1),   \end{equation}
where $\mathcal{M}_n$ denotes the set of all finite stopping rules adapted to the filtration $\{\mathcal{F}_j\}$, $\mathcal{F}_j$ being the $\sigma$-algebra generated by $(X_1,D_1), (X_2,D_2), \ldots, (X_j,D_j)$.

Note that an applicant would be selected only if it is relatively best among those that have been interviewed. For $j=1,2,\ldots,2n$, let $T_j$ be the time at which the $j$-th relatively best applicant appears, i.e.\ $T_1=1$ and, for $j=2,3,\ldots,2n$, define $T_j=\min\{i:i>T_{j-1},\,X_i=1\}$ (with $\min\varnothing := 2n$). Set $\sum_{i=\ell}^{m} c_{\ell}=0$ whenever $\ell>m$, and, for $i=1,\ldots,n$, define
\begin{equation} \label{2.3}
\alpha(i)=\frac{2n}{i}+\sum_{j=i+1}^{n}\frac{1}{j-1}.     
\end{equation}
Let
\begin{equation}\label{2.4}
r_n=\min\{\, i\ge 1:\ \alpha(i)\le 5 \,\}.    
\end{equation}
The following theorem is our main result.

\begin{thm}\label{thm 2.1}The stopping rule
\[
\tau_n=\min\{\, i\ge 1:\ X_i=1,\ S_i=2\ \text{and}\ D_i\ge r_n \,\}
\]
is optimal for the problem \eqref{2.2}, i.e.\ the optimal rule stops on the first $T_j$ with $D_{T_j}\ge r_n$ and $S_{T_j}=2$.
\end{thm}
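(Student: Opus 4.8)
The plan is to verify that the optimal stopping problem \eqref{2.2} is a monotone stopping problem in the sense of Chow--Robbins and then invoke the standard result that the one-stage look-ahead (OLA) rule is optimal for monotone problems, finally checking that the OLA rule coincides with $\tau_n$. The first step is to restrict attention to the relevant decision epochs: since an applicant can only be selected when it is relatively best, it suffices to decide, at each time $T_j$, whether to stop. I would reparametrize the problem in terms of the embedded sequence observed at the times $T_j$, recording the pair $(D_{T_j}, S_{T_j})$ together with the indicator that $X_{T_j}=1$. The key distributional input is the observation already stated in the excerpt: the relative ranks $X'_1,\ldots,X'_n$ of the successive new absolute ranks are independent with $X'_j$ uniform on $\{1,\ldots,j\}$. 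From this one can compute, conditionally on the current configuration $(D_{T_j},S_{T_j})$, (a) the probability that stopping at $T_j$ yields $R_{T_j}=1$ — this is essentially the probability that one of the two best absolute ranks has already appeared and that the current applicant realizes it, which depends only on $D_{T_j}$ and $S_{T_j}$ — and (b) the conditional law of the next configuration. Call $s_j := P(\text{win} \mid \text{stop at } T_j,\ \mathcal{F}_{T_j})$ and let $w_j$ be the payoff of the OLA rule from $T_{j+1}$ onward. The monotone condition requires that the event $A_j = \{ s_j \ge E[s_{j+1}\mid \mathcal{F}_{T_j}] \}$ — ``stopping now is at least as good as continuing exactly one more step and then stopping optimally there'' — is closed, i.e.\ $A_j \subseteq A_{j+1}$ almost surely.

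The heart of the argument is the computation of the OLA boundary and the verification of monotonicity. Following the Dynkin \cite{DynkinEB1963} approach referenced in the introduction, I would derive a clean closed form for $s_j$ as a function of $(D_{T_j},S_{T_j})$: when $S_{T_j}=2$ the current relatively best applicant's absolute rank has reappeared, which (since there are only two copies of each quality) forces its absolute rank to be $1$ with a probability one can write down explicitly in terms of $D_{T_j}$; when $S_{T_j}=1$ there is still a chance the true best has not yet surfaced. Similarly one computes $E[s_{j+1}\mid\mathcal{F}_{T_j}]$ by conditioning on how many new absolute ranks arrive before the next relatively best applicant and whether the next relatively best is a repeat, again using independence and uniformity of the $X'$'s. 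Carrying out this bookkeeping, the inequality $s_j \ge E[s_{j+1}\mid\mathcal{F}_{T_j}]$ should reduce, after simplification, to the condition $D_{T_j}\ge r_n$ together with $S_{T_j}=2$, where $r_n$ is exactly the threshold defined by \eqref{2.3}--\eqref{2.4}: the function $\alpha(i)$ in \eqref{2.3} is precisely the ratio arising in that comparison, and $\alpha(i)\le 5$ is the ``stop'' region. This is why $\tau_n$ stops on the first $T_j$ with $D_{T_j}\ge r_n$ and $S_{T_j}=2$.

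Once the OLA region is identified as $\{X_i=1,\ S_i=2,\ D_i\ge r_n\}$, monotonicity follows from two structural facts: $D_i$ is nondecreasing in $i$, so once $D_{T_j}\ge r_n$ it stays $\ge r_n$; and the constraint $S_i=2$ is automatically the ``stopping'' situation because it certifies a repeat. The only subtlety is to check that at a later decision epoch $T_{j+1}$ with $S_{T_{j+1}}=2$ and $D_{T_{j+1}}\ge r_n$ the OLA inequality again holds — i.e.\ that the one-step comparison does not reverse — which follows because $\alpha(D_{T_{j+1}})\le\alpha(D_{T_j})\le 5$, $\alpha$ being decreasing in its argument for the relevant range (this monotonicity of $\alpha$ is the one routine estimate I would actually check). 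Hence $A_j\subseteq A_{j+1}$, the problem is monotone, and by the Chow--Robbins / Ferguson \cite[Chapter 5]{Ferguson2008} theorem the OLA rule $\tau_n$ is optimal for \eqref{2.2}, which is the assertion of Theorem \ref{thm 2.1}.

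I expect the main obstacle to be step two: getting the conditional win-probability $s_j$ and its one-step expectation into a form clean enough that the OLA inequality collapses to the single scalar condition $\alpha(D_{T_j})\le 5$. This requires carefully handling the two cases $S=1$ versus $S=2$ at the current epoch and the several cases for what happens at the next relatively best applicant (new record that is a first appearance vs.\ a repeat, and how many distinct ranks intervene), and recognizing that the only way $R_{T_j}=1$ with $S_{T_j}=1$ is ruled out unless the best has not yet been seen — a combinatorial identity using the exchangeability \eqref{2.1} and the Rényi representation. Everything downstream (monotonicity, optimality) is then essentially bookkeeping with the decreasing function $\alpha$.
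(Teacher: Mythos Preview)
Your overall architecture---reduce to an embedded sequence of decision epochs, compute the one-stage look-ahead (OLA) boundary, verify monotonicity, invoke Chow--Robbins---is exactly the paper's. The gap is in the choice of decision epochs. You embed at \emph{all} relatively-best times $T_j$, carrying the pair $(D_{T_j},S_{T_j})$, and then assert that the OLA ``stop'' region is $\{S=2,\ D\ge r_n\}$ and that monotonicity holds. But with this embedding monotonicity fails. From a state $(i,2)$ with $i\ge r_n$ (where OLA says stop), the next relatively-best applicant is necessarily a \emph{new} record, since the current minimum's twin has already appeared; hence the next epoch has $S_{T_{j+1}}=1$. At such a state $(i',1)$ with $i'<n$, the immediate win probability equals $i'/n$ while the one-step continuation has $E[D_{T_{j+2}}\mid (i',1)]>i'$ (the twin can arrive with $D$ unchanged, but a further new record can also arrive with $D$ strictly larger), so OLA says continue. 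Thus $A_j\not\subset A_{j+1}$ and the Chow--Robbins theorem does not apply. Your sentence ``the constraint $S_i=2$ is automatically the `stopping' situation because it certifies a repeat'' papers over precisely this transition, and your monotonicity check only treats the case $S_{T_{j+1}}=2$.

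The paper avoids this by inserting one preliminary observation before the OLA analysis: it is never optimal to stop at an epoch with $S=1$, because if the current relatively best really has absolute rank $1$ then its twin will be the next relatively-best applicant and one can stop there with the same payoff, while if it does not have rank $1$ stopping now loses. Having made this reduction, the paper embeds at the times $\tilde T_j=\min\{k>\tilde T_{j-1}:X_k=1,\ S_k=2\}$ and works with $Z_j=P(T=\tilde T_j\mid D_{\tilde T_j})=D_{\tilde T_j}/n$. Along this subsequence the state is one-dimensional (just $D_{\tilde T_j}$), the one-step continuation value is $B_{D_{\tilde T_j}}$ from Lemma~\ref{lem 2.1}, the OLA inequality becomes $\alpha(D_{\tilde T_j})\le 5$, and monotonicity is immediate since $D_{\tilde T_j}$ is nondecreasing and $\alpha$ is decreasing. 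So the fix to your plan is simply to perform the $S=1$ elimination first and then run your OLA/monotone argument on the $\tilde T_j$ epochs; Lemma~\ref{lem 2.1} is what makes the comparison collapse to $\alpha(i)\le 5$.

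A smaller point: you suggest that the immediate win probability $s_j$ depends on $S_{T_j}$. It does not: conditional on $D_{T_j}=i$, the current relatively best has absolute rank $1$ with probability $i/n$ in either case, since the sequence of first-appearance ranks is a uniformly random permutation of $\{1,\dots,n\}$ and the event $\{S_{T_j}=s\}$ carries no information about which element of the first $i$ distinct ranks is the minimum. What differs between $S=1$ and $S=2$ is the \emph{continuation} value, and that is exactly why the monotone structure only emerges after you discard the $S=1$ epochs.
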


Before proving Theorem \ref{thm 2.1}, we note some general facts concerning the evolution of the number of distinct absolute ranks and that of the relatively best applicant has appeared up to an interview. Specifically, suppose at some stage $j$ there are $i$ distinct absolute ranks that have been observed so far, and the relatively best applicant has appeared $m$ times ($m=1,2$). Let $(i,m)$ denote this configuration, i.e. $(D_j,S_j)=(i,m)$. Recall that $X'_1,X'_2,\ldots,X'_n$ are independent with $X'_j$ being uniformly distributed on $\{1,2,\ldots,j\}$. This implies that $\{(D_i,S_i)\}_{i=1,2,\ldots,2n}$ is a Markov chain and the transition probailities are as follows:
\begin{equation}\label{2.5}
\begin{cases}
& P_{j,j+1}((i,1),(i,2))=\frac{1}{2n-j},\\ 
& P_{j,j+1}((i,1),(i,1))=\frac{2i-j-1}{2n-j}, \\
& P_{j,j+1}((i,1),(i+1,1))=\frac{2(n-i)}{2n-j}
\end{cases}
\end{equation}

\begin{equation}\label{2.6}
\begin{cases}
& P_{j,j+1}((i,2),(i,2))=\frac{2i-j}{2n-j},\\ 
& P_{j,j+1}((i,2),(i+1,1))=\frac{2(n-i)}{2n-j}\cdot\frac{1}{i+1}, \\
& P_{j,j+1}((i,2),(i+1,2))=\frac{2(n-i)}{2n-j}\cdot\frac{i}{i+1}.
\end{cases}
\end{equation}
Let $T$ be the time at which the last relatively best applicant appears, i.e. the second one of the two best applicants appears.

For $j=1,\ldots,2n-1$ put $j'=\min\{k>j:\ X_k=1 \text{ and } S_k=2\}$, and we define, for $i=1,2,\ldots,n$,
\begin{equation} \label{2.7}
A_{i,j}=P\bigl(T=j' \mid D_{j}=i,\, S_{j}=1\bigr),\qquad 2i>j\geq i
\end{equation}
and
\begin{equation} \label{2.8}
B_{i,j}=P\bigl(T=j' \mid D_{j}=i,\, S_{j}=2\bigr), \qquad 2i\geq j>i.
\end{equation}
(To be clear, $\min\varnothing := 2n$.)

To prove Theorem \ref{thm 2.1}, we shall make use of the following lemma, whose proof is relegated to Section \ref{sec4}.

\begin{lem} \label{lem 2.1}
   Define $A_{i,j}$ and $B_{i,j}$ as in \eqref{2.7} and \eqref{2.8}. Then
    \[
A_{i,j}=\frac{2n+i}{3n}:=A_i,\qquad 2i>j\geq i
\]
\textit{and}
\[
B_{i,j}=\frac{2(n-i)}{3n}+\frac{i}{3n}\sum_{\ell=i+1}^{n}\frac{1}{\ell-1}:=B_i,\qquad 2i\geq j>i.
\]
\end{lem}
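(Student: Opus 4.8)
The plan is to compute $A_{i,j}$ and $B_{i,j}$ by conditioning on the next transition of the Markov chain $\{(D_k,S_k)\}$ and deriving recursive relations in $j$ (with $i$ or the pair $(i,m)$ as parameters), then solving the recursions. Let me set up the backward induction on $j$, from $j=2n$ downward. Note first that the event $\{T=j'\}$ means: starting from the current configuration, the \emph{very next} time a relatively best applicant appears for the second time (i.e.\ with $S=2$) is in fact the last such occurrence — equivalently, after reaching the state $S=2$ on a relatively best applicant, no strictly better applicant's rank ever reappears as a relative best again; more precisely it says the chain, once it hits some $(D_k,S_k)=(\cdot,2)$ with $X_k=1$, never afterward revisits an $S=1 \to S=2$ transition \emph{at a relatively best position that strictly precedes the eventual} $T$. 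Actually the cleanest reading: $T=j'$ iff from time $j$ onward the first relatively-best-with-$S=2$ event coincides with the global last relatively best appearance; so conditioning on the state at time $j$, I should ask what happens at step $j+1$.

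For $A_{i,j}$ (state $(i,1)$, valid when $2i>j\ge i$), condition on the three transitions in \eqref{2.5}. With probability $\frac{1}{2n-j}$ we move to $(i,2)$ at time $j+1$ via a relatively best applicant — but one must check whether this $j+1$ is exactly $j'$; since we were at $S=1$ and just saw $X_{j+1}=1$ with the count going to $2$, yes this is $j'$, and then $\{T=j'\}$ holds iff no further relatively best applicant ever appears, which from state $(i,2)$ at time $j+1$ is a computable quantity — call it $C_{i,j+1}=P(\text{no more relatively best applicant} \mid D_{j+1}=i,S_{j+1}=2)$. With probability $\frac{2i-j-1}{2n-j}$ we stay at $(i,1)$ and recurse to $A_{i,j+1}$; with probability $\frac{2(n-i)}{2n-j}$ we go to $(i+1,1)$ and recurse to $A_{i+1,j+1}$. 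This gives
\begin{align*}
(2n-j)A_{i,j} = C_{i,j+1} + (2i-j-1)A_{i,j+1} + 2(n-i)A_{i+1,j+1}.
\end{align*}
Similarly, using \eqref{2.6}, $B_{i,j}$ satisfies a recursion in which a transition to $(i+1,2)$ may or may not create a new $j'$: from $(i,2)$, an $X=1$ event at the new number pushes to $(i+1,2)$ and resets effectively, so I need the analogous $C$ quantities and the relations $B_{i,j}\leftrightarrow B_{i,j+1}, B_{i+1,j+1}$. The auxiliary quantities $C_{i,j}=P(T\le j\mid D_j=i,S_j=2)$ — "the relatively best applicant never changes again" — satisfy their own simple recursion from \eqref{2.6}: $(2n-j)C_{i,j} = (2i-j)C_{i,j+1} + \frac{2(n-i)}{i+1}C_{i+1,j+1}$, with boundary $C_{i,2n}=1$. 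One guesses/verifies $C_{i,j}=i/(D)$-type closed forms; in fact the natural ansatz is that $C_{i,j}$ depends on $i$ and $j$ through a ratio reminiscent of the CSP, and plugging in confirms it.

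The key simplification I anticipate is that the claimed answers $A_{i,j}=A_i$ and $B_{i,j}=B_i$ are \emph{independent of} $j$ (within their valid ranges), so the plan is to \emph{verify} these closed forms satisfy the recursions rather than to derive them from scratch. Substituting $A_{i,j}=A_{i,j+1}=A_i=(2n+i)/(3n)$ into the $A$-recursion collapses the $j$-dependent coefficients: $(2n-j)A_i = C_{i,j+1} + (2i-j-1)A_i + 2(n-i)A_{i+1}$, i.e.\ $(2n-2i+1)A_i - 2(n-i)A_{i+1} = C_{i,j+1}$, and the left side must therefore be $j$-independent, forcing $C_{i,j}$ to be constant in $j$ on the relevant range — which one checks is consistent, with $C_i := C_{i,j}$ determined by its own recursion, and then the identity $(2n-2i+1)A_i - 2(n-i)A_{i+1}=C_i$ becomes an algebraic check given $A_i=(2n+i)/(3n)$ and the formula for $C_i$. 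The boundary cases (e.g.\ $j=2i$ for $A$, where the state $(i,1)$ is about to be forced, and $j=i$, the first time $D$ can equal $i$; likewise $j=i$ and $j=2i$ for $B$) must be handled separately to anchor the induction, and the $B$ formula $B_i=\frac{2(n-i)}{3n}+\frac{i}{3n}\sum_{\ell=i+1}^n\frac{1}{\ell-1}$ with its harmonic sum is verified the same way, using $B_{n,j}$-type base values and the coupling between the $A$ and $B$ recursions at transitions that change $S$.

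The main obstacle, I expect, is bookkeeping the boundaries and the interllocking of the $A$, $B$, and $C$ recursions across the $S=1\to S=2$ and $D=i\to D=i+1$ transitions — in particular making sure that every transition that produces a \emph{new} relatively best applicant with $S=2$ is correctly identified as the defining instant $j'$ (versus a transition that merely updates $D$ while $S$ stays at $2$, or one that creates a fresh $S=1$ relative best which resets the whole question). Once the correct recursions are in hand, the verification that the proposed closed forms solve them should reduce to elementary algebra with harmonic sums and telescoping; the harmonic-sum term in $B_i$ is exactly what one gets by iterating the $C$-recursion, which itself is a CSP-flavored computation. I would therefore structure the proof as: (i) write the $C$-recursion and solve for $C_i$; (ii) write the $A$-recursion, substitute the ansatz, reduce to an algebraic identity in $i$, verify; (iii) write the $B$-recursion, substitute, reduce to an identity involving the harmonic sum, verify; (iv) confirm all boundary values.
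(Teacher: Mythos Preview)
Your plan---backward induction on $(i,j)$ through the Markov transitions \eqref{2.5}--\eqref{2.6}, then verifying that the $j$-independent closed forms satisfy the resulting recursions---is exactly the paper's proof. The one streamlining the paper makes is to bypass your auxiliary $C_{i,j}$ entirely: since the $X'_\ell$ are independent with $X'_\ell$ uniform on $\{1,\ldots,\ell\}$, the probability (given $D_j=i$) that no future relatively best applicant ever appears is immediately $\prod_{\ell=i}^{n-1}\tfrac{\ell}{\ell+1}=i/n$, so no separate $C$-recursion is needed (and note your $C$-recursion as written has the coefficient $\tfrac{2(n-i)}{i+1}$ where it should be $\tfrac{2(n-i)i}{i+1}$).
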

\begin{proof}[Proof of Theorem \ref{thm 2.1}]
Note that the optimal rule evidently stops on a relatively best applicant whose absolute rank has appeared twice. 

Set $\tilde{T}_0=1$, and for $j=1,2,\ldots,n$, let $\tilde{T}_j=\left(\tilde{T}_{j-1}\right)'=\min\{k>\tilde{T}_{j-1}:\ X_k=1 \text{ and } S_k=2\}$ ( $\min\varnothing := 2n$). Now we restrict our attention on the times $\tilde{T}_j$'s and let
\[
Z_j=P\bigl(T=\tilde{T}_j \mid D_{\tilde{T}_j}\bigr), \quad j=1,\ldots,n.
\]
Then our original optimal stopping problem \eqref{2.2} is reduced to that for the process $\{Z_j,\mathcal{F}_{\tilde T_j}\}_{j=1,\ldots,n}$. More precisely, letting $\widetilde{\mathcal{M}}_n $ denote the class of all finite stopping rules adapted to $\{\mathcal{F}_{\tilde T_j}\}$, we want to find a stopping rule $\zeta_n\in\widetilde{\mathcal{M}}_n$ such that
\begin{equation} \label{2.9}
E(Z_{\zeta_n})=\sup_{\zeta\in\widetilde{\mathcal{M}}_n}E(Z_\zeta).
\end{equation}

We claim that the problem \eqref{2.9} is monotone, i.e.\ the sequence of events $\{Z_j\ge E(Z_{j+1}\mid \mathcal{F}_{\tilde T_j})\}$ is increasing in $j$. Suppose on some $\{\tilde T_j<2n\}$ we have $D_{\tilde T_j}=i$ for some $i=1,\ldots,n$. Then
\[
\begin{aligned}
Z_j
&=P\!\left(T=\tilde T_j \mid D_{\tilde T_j}=i\right)\\
&=P\!\left(T=\tilde T_j \mid D_{\tilde T_j}=i,\ S_{\tilde T_j}=2\right)\\
&=P\!\left(X'_k\neq 1 \,\text{for all} \,k\geq i+1\right)\\
&=\prod_{\ell=i}^{n-1}\frac{\ell}{\ell+1}
=\frac{i}{n}.
\end{aligned}
\]

If we continue from $D_{\tilde T_j}=i$ and stop at $\tilde T_{j+1}$, we expect to gain
\[
\begin{aligned}
E(Z_{j+1}\mid \mathcal{F}_{\tilde T_j})
&=P\!\left(T=\tilde T_{j+1}\mid D_{\tilde T_j}=i\right)\\
&=P\!\left(T=\tilde T_{j+1}\mid D_{\tilde T_j}=i,\ S_{\tilde T_j}=2\right)\\
&=B_i,
\end{aligned}
\]
where $B_i=\dfrac{2(n-i)}{3n}+\dfrac{i}{3n}\sum_{\ell=i+1}^{n}\dfrac{1}{\ell-1}$ by Lemma \ref{lem 2.1}. It follows that $Z_j\ge E(Z_{j+1}\mid \mathcal{F}_{\tilde T_j})$, if and only if
\[
\frac{D_{\tilde T_j}}{n}\ \ge\ B_{D_{\tilde T_j}}
=\frac{2(n-D_{\tilde T_j})}{3n}+\frac{D_{\tilde T_j}}{3n}\sum_{\ell=D_{\tilde T_j}+1}^{n}\frac{1}{\ell-1},
\]
which is equivalent to $\alpha(D_{\tilde T_j})\le 5$, where $\alpha(i)$ is defined in \eqref{2.3}. Hence, $\alpha(D_{\tilde T_{j+1}})\le \alpha(D_{\tilde T_j})\le 5$ (since $D_{\tilde T_j}$ is increasing in $j$ and $\alpha(i)$ is decreasing in $i$), which in turn implies that $Z_{j+1}\ge E(Z_{j+2}\mid \mathcal{F}_{\tilde T_{j+1}})$. As a result, $\{Z_j\ge E(Z_{j+1}\mid \mathcal{F}_{\tilde T_j})\}\subset \{Z_{j+1}\ge E(Z_{j+2}\mid \mathcal{F}_{\tilde T_{j+1}})\}$, and hence the problem \eqref{2.9} is a (finite horizon) monotone stopping problem. Therefore, the one-stage look-ahead rule is optimal (cf.\ Chow and Robbins \cite{chow1961martingale} and Ferguson \cite[Chapter 5]{Ferguson2008}), i.e.,
\begin{equation} \label{2.10}
\begin{aligned}
\zeta_n
&=\min\bigl\{\, j\ge 1:\ Z_j\ge E(Z_{j+1}\mid \mathcal{F}_{\tilde T_j}) \,\bigr\}\\
&=\min\bigl\{\, j\ge 1:\ \alpha(D_{\tilde T_j})\le 5 \,\bigr\}\\
&=\min\bigl\{\, j\ge 1:\ D_{\tilde T_j}\ge r_n \,\bigr\}.
\end{aligned}
\end{equation}
is optimal for \eqref{2.9}, where the third equality follows from \eqref{2.3}-\eqref{2.4} and the monotonicity of $D_{T_j}$ and $\alpha(i)$. Hence, our original optimal stopping problem \eqref{2.2} has an optimal stopping rule
\begin{align*}
\tau_n &=\min\left\{\tilde{T}_j:D_{\tilde{T}_j}\geq r_n\right\}\\
&= \min\{T_j : D_{T_j} \ge r_n \text{ and } S_{T_j} = 2\}\\
      &= \min\{i \ge 1 : X_i = 1, D_i \ge r_n \text{ and } S_i = 2\}.
\end{align*}
The proof is complete.
\end{proof}

\begin{thm}\label{thm 2.2}
Let $f(x)=x e^{-2/x}$, $x>0$. We have that $r:=\lim_{n\to\infty} r_n/n$ exists and satisfies $f(r)=e^{-5}$. Note that $r$ is approximately $0.470927$.
\end{thm}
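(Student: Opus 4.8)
The plan is to pass to the limit inside the defining relation $r_n=\min\{i\ge 1:\alpha(i)\le 5\}$ by identifying the pointwise limit of $\alpha(\lfloor xn\rfloor)$ for fixed $x\in(0,1)$, and then to rewrite the resulting equation in the form $f(r)=e^{-5}$. First I would record the elementary facts that make $r_n$ well behaved: $\alpha(i)$ is strictly decreasing in $i$, since $\alpha(i+1)-\alpha(i)=-\tfrac{2n}{i(i+1)}-\tfrac1i<0$, so for $n$ large enough that $\alpha(1)=2n>5$ the index $r_n$ is characterized by $\alpha(r_n)\le 5<\alpha(r_n-1)$; and since $\alpha(i)\ge 2n/i$, the inequality $\alpha(r_n)\le 5$ forces $r_n\ge 2n/5$, so $r_n\to\infty$. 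This last observation is what licenses the use of harmonic-sum asymptotics along the sequence $i=r_n$.

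Next I would set $g(x):=\tfrac2x-\ln x$ for $x>0$ and prove that $\alpha(\lfloor xn\rfloor)\to g(x)$ as $n\to\infty$ for each fixed $x\in(0,1)$. The first term is immediate, $2n/\lfloor xn\rfloor\to 2/x$, and for the harmonic part one uses $\sum_{j=\lfloor xn\rfloor+1}^{n}\tfrac1{j-1}=H_{n-1}-H_{\lfloor xn\rfloor-1}=\ln\bigl(n/\lfloor xn\rfloor\bigr)+o(1)\to-\ln x$. I would then note that $g$ is continuous and strictly decreasing on $(0,\infty)$, with derivative $-2/x^{2}-1/x<0$, and that $g(0^{+})=+\infty$ while $g(1)=2<5$; hence there is a unique $r\in(0,1)$ with $g(r)=5$, and a direct numerical estimate locates it near $0.47$.

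To obtain $r_n/n\to r$, fix a small $\varepsilon>0$. Since $g$ is decreasing and $g(r)=5$, we have $g(r-\varepsilon)>5$, so for all large $n$, $\alpha(\lfloor(r-\varepsilon)n\rfloor)>5$; by monotonicity of $\alpha$ this means $\lfloor(r-\varepsilon)n\rfloor<r_n$, whence $r_n/n\ge r-\varepsilon$. Symmetrically $g(r+\varepsilon)<5$ gives $\alpha(\lfloor(r+\varepsilon)n\rfloor)<5$ for all large $n$, hence $r_n\le\lfloor(r+\varepsilon)n\rfloor$ and $r_n/n\le r+\varepsilon$. Since $\varepsilon>0$ is arbitrary, $\lim_{n\to\infty}r_n/n=r$.

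Finally I would rewrite the equation $g(r)=5$: from $\tfrac2r-\ln r=5$ we get $\ln r=\tfrac2r-5$, i.e.\ $r=e^{2/r-5}$, i.e.\ $r e^{-2/r}=e^{-5}$, which is precisely $f(r)=e^{-5}$; note also that $f'(x)=e^{-2/x}(1+2/x)>0$, so this equation has a unique solution, and solving it numerically gives $r\approx 0.470927$. The only point requiring a little care is the harmonic-sum asymptotic together with the floor functions and the prior observation that $r_n\to\infty$ so that the asymptotics legitimately apply along $i=r_n$; the rest is just monotonicity of $\alpha$ and continuity and monotonicity of $g$.
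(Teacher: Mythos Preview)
Your proof is correct. The approach is essentially the same as the paper's---both rest on the characterization $\alpha(r_n)\le 5<\alpha(r_n-1)$ and the fact that $\alpha(\lfloor xn\rfloor)\to 2/x-\ln x$---but the execution differs in two small ways worth noting. First, the paper bounds the harmonic tail $\sum_{j=i+1}^{n}\tfrac1{j-1}$ above and below by explicit integrals $\int_{r_n-1/2}^{n-1/2}$ and $\int_{r_n+1}^{n}$, producing concrete inequalities that sandwich $r'_n e^{-2/r'_n}$ around $e^{-5}$ for every $n$; you instead invoke the asymptotic $H_m=\ln m+\gamma+o(1)$ directly. Second, the paper concludes via a compactness argument (the bounded sequence $r'_n$ has accumulation points, each of which must solve $f(s)=e^{-5}$, and that equation has a unique solution), whereas you give a direct $\varepsilon$-sandwich using the monotonicity of the limiting function $g$. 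Your route is a bit cleaner and avoids the accumulation-point detour; the paper's route is slightly more self-contained since it never appeals to the Euler--Mascheroni asymptotic. Either way the substance is the same.
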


\begin{proof}
Note that $r_2=1$, $r_3=2$ and $r_n$ is increasing in $n$. Thus, $r_n\ge 2$ for $n\ge 3$. By definition (cf.\ \eqref{2.3}–\eqref{2.4}),
\begin{equation}\label{2.11}
\begin{aligned} 
5 \;<\; \alpha(r_n-1)
&= \frac{2n}{r_n-1} + \sum_{j=r_n}^{n}\frac{1}{j-1} \\
&= \frac{2n+1}{r_n-1} + \sum_{j=r_n}^{n-1}\frac{1}{j} \\
&< \frac{2n+1}{r_n-1} + \int_{\,r_n-\frac{1}{2}}^{\,n-\frac{1}{2}} \frac{dx}{x} \\
&= \frac{2n+1}{r_n-1} + \log\!\left(\frac{n-\frac{1}{2}}{\,r_n-\frac{1}{2}\,}\right) \\
&=  \frac{2+\frac{1}{n}}{\frac{r_n}{n}-\frac{1}{n}}
   + \log\!\left(\frac{1-\frac{1}{2n}}{\frac{r_n}{n}-\frac{1}{2n}}\right). 
\end{aligned}
\end{equation}
Letting $r'_n=r_n/n$, it follows that
\begin{equation} \label{2.12}
\Bigl(r'_n-\frac{1}{2n}\Bigr)\,
e^{-\frac{2+\frac{1}{n}}{r'_n-\frac{1}{n}}}
\;<\;
\Bigl(1-\frac{1}{2n}\Bigr)\,e^{-5}.
\end{equation}
On the other hand, we have
\begin{equation} \label{2.13}
\begin{aligned}
5 \;\ge\; \alpha(r_n)
&= \frac{2n}{r_n} + \sum_{j=r_n+1}^{n}\frac{1}{j-1} \\
&= \frac{2n+1}{r_n} + \sum_{j=r_n+1}^{n-1}\frac{1}{j}\\
&> \frac{2n+1}{r_n} \;+\; \int_{r_n+1}^{n} \frac{dx}{x} \\
&= \frac{2n+1}{r_n} \;+\; \log\!\left(\frac{n}{r_n+1}\right) \\
&= \frac{2+\frac{1}{n}}{r'_n} \;+\; \log\!\left(\frac{1}{\,r'_n+\frac{1}{n}\,}\right),
\end{aligned}
\end{equation}
from which it follows that
\begin{equation} \label{2.14}
\left(r'_n+\frac{1}{n}\right) \, e^{-\frac{2+\frac{1}{n}}{r'_n}} \;>\; e^{-5}.
\end{equation}
Since $2/n \le r'_n \le 1$ for each $n \ge 3$, $\{r'_n\}_{n\ge3}$ is a bounded sequence. For each accumulation point $s$ of $\{r'_n\}_{n\ge3}$, it follows from \eqref{2.12} and \eqref{2.14} that $f(s)=e^{-5}$.

By \eqref{2.14},
\[
r'_n+\frac{1}{n} \;>\; e^{-5}
\]
implying that $s \ge e^{-5} > 0$. In addition, elementary calculus yields that $f(x)$ is strictly increasing and continuous in $(0,\infty)$ with $\lim_{x\to0+} f(x)=0$ and $\lim_{x\to\infty} f(x)=\infty$. Hence, $f(x)=e^{-5}$ has a unique solution in $(0,\infty)$ and this implies $\{r'_n\}$ has only one accumulation point $r$. It is clear that $r=\lim_{n\rightarrow \infty} r_n$ and satisfies $f(r)=e^{-5}$. The proof is complete.
\end{proof}

\begin{thm} \label{thm 2.3}
    The maximum probability of selecting one of the two best applicants is
    \[
\frac{1}{3n}\left\{ \left(1-r_n+\sum_{i=1}^{r_n-1}\prod_{j=i}^{r_n-1}\frac{2(n-j)}{2(n-j)+1}\right)\left(3-\sum_{\ell=r_n}^{n-1}\frac{1}{\ell}\right)+2n+r_n\right\},
\]
and
\[
\lim\limits_{n\rightarrow \infty}p_n=r+\frac{4}{3r}\left[(1-r)^{3/2}-(1-r)^2\right]=0.767974\ldots.
\]
\end{thm}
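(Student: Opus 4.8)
The plan is to combine the reduction carried out in the proof of Theorem~\ref{thm 2.1} with the strong Markov property of $\{(D_i,S_i)\}$ and Lemma~\ref{lem 2.1}, and then to evaluate one auxiliary probability explicitly. Let $\theta_i=\min\{k:D_k=i\}$ and write $\theta=\theta_{r_n}$. From the proof of Theorem~\ref{thm 2.1}, the optimal rule equals $\theta'=\min\{k>\theta:X_k=1,\ S_k=2\}$ (at time $\theta$ the current applicant carries a brand-new absolute rank, so one cannot have $X_\theta=1$ and $S_\theta=2$ simultaneously), and $R_{\theta'}=1$ iff $\theta'=T$; hence $p_n=P(\theta'=T)$. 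Since $\{(D_i,S_i)\}$ is a Markov chain and, by Lemma~\ref{lem 2.1}, $P(T=j'\mid D_j=i,S_j=s)$ does not depend on $j$, the strong Markov property at the hitting time $\theta$ (for which $D_\theta=r_n$) gives
\[
p_n=P(S_\theta=1)\,A_{r_n}+P(S_\theta=2)\,B_{r_n}=B_{r_n}+P(S_\theta=1)\,(A_{r_n}-B_{r_n}).
\]
A direct computation from the formulas in Lemma~\ref{lem 2.1} (using $\sum_{\ell=i+1}^{n}\frac1{\ell-1}=\sum_{\ell=i}^{n-1}\frac1\ell$) gives $A_{r_n}-B_{r_n}=\frac{r_n}{3n}\bigl(3-\sum_{\ell=r_n}^{n-1}\frac1\ell\bigr)$ and $B_{r_n}=\frac{2n+r_n}{3n}-\frac{r_n}{3n}\bigl(3-\sum_{\ell=r_n}^{n-1}\frac1\ell\bigr)$, so it remains to find $P(S_\theta=1)$.

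Put $\beta_i=P(S_{\theta_i}=2)$, so $\beta_1=0$. From \eqref{2.5}--\eqref{2.6}, once the chain is at level $i$ with $S=1$ the running minimum reappears before a new absolute rank appears with probability $q_i:=\frac1{2(n-i)+1}$, while each newly appearing absolute rank is a new record with probability $\frac1{i+1}$ (these one-step probabilities, conditioned on leaving the current state, are free of time). Conditioning on $S_{\theta_i}$ therefore yields
\[
\beta_{i+1}=\frac{i}{i+1}\bigl(q_i+(1-q_i)\beta_i\bigr),\qquad \beta_1=0 .
\]
With $\gamma_i=i\beta_i$ this becomes $\gamma_{i+1}=iq_i+(1-q_i)\gamma_i$, whence $\gamma_{i+1}=\sum_{k=1}^{i}kq_k\prod_{j=k+1}^{i}(1-q_j)$; using $kq_k\prod_{j=k+1}^{i}(1-q_j)=k\prod_{j=k+1}^{i}(1-q_j)-k\prod_{j=k}^{i}(1-q_j)$ and telescoping gives $\gamma_{i+1}=i-\sum_{k=1}^{i}\prod_{j=k}^{i}(1-q_j)$. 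Since $1-q_j=\frac{2(n-j)}{2(n-j)+1}$, we get $\gamma_{r_n}=(r_n-1)-\sum_{i=1}^{r_n-1}\prod_{j=i}^{r_n-1}\frac{2(n-j)}{2(n-j)+1}=-C_n$, hence $P(S_\theta=1)=1-\gamma_{r_n}/r_n=(r_n+C_n)/r_n$. Substituting into the display above and simplifying with the two identities for $A_{r_n}-B_{r_n}$ and $B_{r_n}$ produces $p_n=\frac1{3n}\bigl\{C_n\bigl(3-\sum_{\ell=r_n}^{n-1}\frac1\ell\bigr)+2n+r_n\bigr\}$, as claimed.

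For the limit, Theorem~\ref{thm 2.2} gives $r_n/n\to r\in(0,1)$ and $r_n\to\infty$, so $\frac{2n+r_n}{3n}\to\frac{2+r}{3}$ and $\sum_{\ell=r_n}^{n-1}\frac1\ell=\log(n/r_n)+o(1)\to-\log r$. For the products, $\log(1-x)=-x+O(x^2)$ together with $\sum_{j=i}^{r_n-1}\frac1{2(n-j)+1}=\frac12\log\frac{n-i}{n-r_n}+o(1)$ uniformly in $i\le r_n-1$ (all denominators being $\ge 2(n-r_n)\to\infty$, which makes the error terms uniform) shows $\prod_{j=i}^{r_n-1}\frac{2(n-j)}{2(n-j)+1}=(1+o(1))\sqrt{(n-r_n)/(n-i)}$, so that
\[
\frac{C_n}{n}=\frac{1-r_n}{n}+\frac1n\sum_{i=1}^{r_n-1}\prod_{j=i}^{r_n-1}\frac{2(n-j)}{2(n-j)+1}\ \longrightarrow\ -r+\sqrt{1-r}\int_0^{r}\frac{dx}{\sqrt{1-x}}=-\bigl(1-\sqrt{1-r}\bigr)^2 .
\]
Hence $p_n=\frac{2n+r_n}{3n}+\frac{C_n}{n}\cdot\frac13\bigl(3-\sum_{\ell=r_n}^{n-1}\frac1\ell\bigr)\to\frac{2+r}{3}-\frac{(1-\sqrt{1-r})^2(3+\log r)}{3}$, and the relation $re^{-2/r}=e^{-5}$ from Theorem~\ref{thm 2.2}, i.e.\ $3+\log r=2(1-r)/r$, turns this into $\frac{2+r}{3}-\frac{2(1-r)(1-\sqrt{1-r})^2}{3r}$; clearing $3r$ and expanding $(1-\sqrt{1-r})^2=2-r-2\sqrt{1-r}$ rewrites it as $r+\frac{4}{3r}\bigl[(1-r)^{3/2}-(1-r)^2\bigr]\approx0.767974$.

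I expect the main obstacle to be the exact evaluation of $P(S_\theta=1)$: first, justifying the reduction $p_n=P(S_\theta=1)A_{r_n}+P(S_\theta=2)B_{r_n}$ requires the remarks that no stop can occur exactly at $\theta$ and that Lemma~\ref{lem 2.1} is applicable there (the constraints $2r_n>\theta$ on $\{S_\theta=1\}$ and $\theta>r_n$ on $\{S_\theta=2\}$ hold automatically since at $\theta$ the $r_n$ distinct ranks have appeared at most twice and the running minimum at most once, resp.\ twice); second, the linearisation $\gamma_i=i\beta_i$ and the telescoping identity that collapse the $\beta_i$-recursion into the closed form built from $C_n$. The limiting statement is then routine once the product estimates are made uniform in $i$ and $re^{-2/r}=e^{-5}$ is used to eliminate $\log r$.
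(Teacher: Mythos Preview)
Your proof is correct and follows the same overall strategy as the paper: write $p_n=P(S_{\theta}=1)A_{r_n}+P(S_{\theta}=2)B_{r_n}$ at the hitting time $\theta=\theta_{r_n}$, invoke Lemma~\ref{lem 2.1}, compute $P(S_\theta=1)$, and substitute. The only tactical difference is in evaluating $P(S_\theta=1)$. The paper decomposes directly on the index $i$ of the last record among $X'_1,\dots,X'_{r_n}$, obtaining
\[
P(S_{\theta}=1)=\frac{1}{r_n}+\sum_{i=1}^{r_n-1}\frac{1}{i}\prod_{j=i}^{r_n-1}\Bigl(\frac{2(n-j)}{2(n-j)+1}\cdot\frac{j}{j+1}\Bigr)
=\frac{1}{r_n}\Bigl(1+\sum_{i=1}^{r_n-1}\prod_{j=i}^{r_n-1}\frac{2(n-j)}{2(n-j)+1}\Bigr),
\]
whereas you reach the same expression via the recursion $\beta_{i+1}=\frac{i}{i+1}\bigl(q_i+(1-q_i)\beta_i\bigr)$, the linearisation $\gamma_i=i\beta_i$, and a telescoping sum. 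Both arguments are short and rely on the same fact that, conditional on leaving state $(i,1)$, the probability of hitting $(i,2)$ equals $q_i=\tfrac{1}{2(n-i)+1}$ regardless of time; your route is slightly more mechanical and would generalise more readily to related recursions, while the paper's is a one-line combinatorial identification. For the limit, the paper simply states that standard asymptotics give the result; your explicit derivation (uniform estimate on the product, Riemann-sum limit, and use of $re^{-2/r}=e^{-5}$ to rewrite $3+\log r=2(1-r)/r$) fills in exactly those details and is correct.
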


\begin{proof}
Recall that 
\[
k_j=\min\{1\leq i\leq 2n:D_j=i\}
\quad\text{and}\quad j'=\min\{k>j:X_k=1\,\text{and}\,S_k=2\}.
\]
Let $\eta_n=k_{r_n}$. By the definitions of $p_n$ and $\tau_n$, we have
\begin{equation} \label{2.15 eq}
    \begin{aligned}
        p_n&=P(R_{\tau_n}=1)\\
        &=P(R_{\tau_n}=1 \,\mid\,S_{\eta_n}=1)P(S_{\eta_n}=1)\\
        &\quad+P(R_{\tau_n}=1 \,\mid\,S_{\eta_n}=2)P(S_{\eta_n}=2)\\
        &=P(T=\eta'_n\,\mid\, S_{\eta_n}=1)P(S_{\eta_n}=1)\\
        &\quad+P(T=\eta'_n\,\mid\,S_{\eta_n}=2)P(S_{\eta_n}=2).
    \end{aligned}
\end{equation}
Note that, by Lemma~\ref{lem 2.1}, 
\begin{align*}
    P(T=\eta_n'\,\mid\,S_{\eta_n}=1)&=P(T=\eta'_n\,\mid\, D_{\eta_n}=r_n,\,S_{\eta_n}=1)\\
    &=A_{r_n}
\end{align*}
and
\begin{align*}
    P(T=\eta_n'\,\mid\,S_{\eta_n}=2)&=P(T=\eta'_n\,\mid\, D_{\eta_n}=r_n,\,S_{\eta_n}=2)\\
    &=B_{r_n}.
\end{align*}
We need to find $P(S_{\eta_n}=1)$, and note that $P(S_{\eta_n}=2)=1-P(S_{\eta_n}=1)$. By the definition of $\eta_n$, we have 
\begin{align*}
    P(S_{\eta_n}=1)=P(S_{\eta_n}=1,\,X'_{r_n}=1)+P(S_{\eta_n}=1,\,X'_{r_n}>1).
\end{align*}
Note that 
\begin{align*}
    P(S_{\eta_n}=1,\,X'_{r_n}=1)=P(X'_{r_n}=1)=\frac{1}{r_n}.
\end{align*}
Moreover, 
\begin{equation*}
    \begin{aligned}
        &P(S_{\eta_n}=1,\,X'_{r_n}>1)\\
        =&\sum_{i=1}^{r_n-1}P(X'_i=1,\,X_s>1\,\text{for all}\,\, k_{r_n}\geq s>k_i)\\
        =&\sum_{i=1}^{r_n-1}P(X'_i=1,\,X_s>1\,\text{for all}\,\, k_{i+1}\geq s>k_i,\\
        &\qquad\quad\, X_s>1\,\text{for all}\,\, k_{i+2}\geq s>k_{i+1},\ldots,\\
        &\qquad\quad\,X_s>1\,\text{for all}\,\, k_{r_n}\geq s>k_{r_n-1})\\
        =&\sum_{i=1}^{r_n-1}\frac{1}{i}\prod_{j=i}^{r_n-1}\left(\frac{2(n-j)}{2(n-j)+1}\cdot\frac{j}{j+1}\right)\\
        =&\frac{1}{r_n}\sum_{i=1}^{r_n-1}\prod_{j=i}^{r_n-1}\frac{2(n-j)}{2(n-j)+1}.
    \end{aligned}
\end{equation*}
So, 
\begin{align*}
    P(S_{\eta_n}=1)=\frac{1}{r_n}\left(1+\sum_{i=1}^{r_n-1}\prod_{j=i}^{r_n-1}\frac{2(n-j)}{2(n-j)+1}\right).
\end{align*}
Now \eqref{2.15 eq} becomes, by Lemma~\ref{lem 2.1},
\begin{equation*}
\begin{aligned}
    p_n&=A_{r_n}\cdot P(S_{\eta_n}=1)+B_{r_n}\cdot(1-P(S_{\eta_n}=1))\\
    &=P(S_{\eta_n}=1)(A_{r_n}-B_{r_n})+B_{r_n}\\
    &=\frac{1}{r_n}\left(1+\sum_{i=1}^{r_n-1}\prod_{j=i}^{r_n-1}\frac{2(n-j)}{2(n-j)+1}\right)\cdot\\
    &\quad \frac{r_n}{3n}\left(3-\sum_{\ell=r_n}^{n-1}\frac{1}{\ell}\right)+\frac{2(n-r_n)}{3n}+\frac{r_n}{3n}\sum_{\ell=r_n}^{n-1}\frac{1}{\ell}\\
    &=\frac{1}{3n}\left\{\left(1-r_n+\sum_{i=1}^{r_n-1}\prod_{j=i}^{r_n-1}\frac{2(n-j)}{2(n-j)+1}\right)\left(3-\sum_{\ell=r_n}^{n-1}\frac{1}{\ell}\right)+2n+r_n\right\}.
\end{aligned}
\end{equation*}
Using the facts that $e^{-x}=1-x+o(x^2)$, and $\sum_{\ell=1}^N\frac{1}{\ell}=\ln N+c_N$ with $c_N\rightarrow \gamma$, it is not difficult to prove that 
\begin{align*}
    \lim\limits_{n\rightarrow \infty}p_n=r+\frac{4}{3r}\left[(1-r)^{3/2}-(1-r)^2\right]=0.767974\ldots.
\end{align*}
The proof is complete.
\end{proof}

\section{Comparing with the CSP and Concluding Remarks}\label{sec3}

Recall that $X'_j$, $j=1,\ldots,n$, stands for the relative rank of the $j$-th new number. As alluded to in Section \ref{sec1}, $X'_1,\ldots,X'_n$ are independent with $X'_j$ being uniformly distributed in $\{1,2,\ldots,j\}$. Let $R'_j$, $j=1,\ldots,n$, be the absolute rank of the $j$-th new number. Let $\mathcal{M}'_n$ denote the set of all finite stopping rules adapted to the filtration $\mathcal{F}'_j$, where $\mathcal{F}'_j$ is the $\sigma$-algebra generated by $X'_1,\ldots,X'_j$. The optimal stopping problem of finding $\nu_n \in \mathcal{M}'_n$ such that
\[
P(R'_{\nu_n}=1)=\sup_{\tau\in\mathcal{M}'_n} P(R'_{\tau}=1)
\]
is equivalent to the CSP. Therefore, $\nu_n=\min\{a_n \le j \le n:\ X'_j=1\}$, where
\[
a_n=\min\left\{\, i\ge 1:\ \sum_{j=i+1}^{n}\frac{1}{j-1}\le 1 \,\right\},
\]
and the optimal probability is
\[
q_n=\frac{a_n-1}{n}\sum_{j=a_n}^{n}\frac{1}{j-1}.
\]
Recall that for $1\leq j\leq n$, define $k_j=\min\{i:D_i=j,\,1\leq i\leq 2n\}$. Now $k_{\nu_n}\in \mathcal{M}_n$, and $P(R_{k_{\nu_n}}=1)=q_n$. Hence $p_n\geq q_n$. In fact, we have the stronger relation: $\{R_{k_{\nu_n}}=1\}\subset \{R_{\tau_n}=1\}$. For $n\geq 3$, if the sequence of the absolute ranks of the $2n$ applicants is $s=(1,s_2,s_3,\ldots,s_{2n-2},1)$, then $s\in\{R_{\tau_n}=1\}$, but $s\notin\{R_{k_{\nu_n}}=1\}$ since $a_n\geq 2$ for $n\geq 3$. This implies that $p_n>q_n$, for $n\geq 3$. Note that $p_1=q_1=1$, $p_2=5/6$, $q_2=1/2$. Hence $p_n>q_n$ for $n\geq 2$. We have proved the part $(i)$ in Theorem~\ref{thm 3.1} below. 

\begin{thm} \label{thm 3.1}
\textup{(i)} For each $n=2,3,\ldots$, $p_n>q_n$.\\
\textup{(ii)} For $8\geq n\geq 1$, $r_n=a_n$ except that $r_7=4$, $a_7=3$; for $n\geq 9$, $r_n>a_n$.
\end{thm}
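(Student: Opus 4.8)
Part~(i) is already proved in the paragraph preceding the statement, so the plan is to prove part~(ii). The first point I would use is that $\alpha$ is \emph{strictly} decreasing in $i$, because $\alpha(i)-\alpha(i+1)=\frac{2n}{i(i+1)}+\frac1i>0$; since $r_n=\min\{i\ge1:\alpha(i)\le5\}$, this makes $r_n>a_n$ \emph{equivalent} to $\alpha(a_n)>5$. For $1\le n\le 8$ I would simply compute $a_n$ and $r_n$ from their definitions (with $\alpha$ as in \eqref{2.3}); this gives $r_n=a_n$ for $n\in\{1,\ldots,6,8\}$ and $r_7=4\ne3=a_7$ — the asserted list — the exceptional case $n=7$ being the one where $\alpha(3)>5\ge\alpha(4)$ while $a_7=3$.

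The substantial part is to show $\alpha(a_n)>5$ for every $n\ge9$. Writing $\beta_n(i)=\sum_{k=i}^{n-1}\frac1k$, so that $\alpha(i)=\frac{2n}{i}+\beta_n(i)$ and $\beta_n(a_n)\le1<\beta_n(a_n-1)$ by definition of $a_n$ (and $a_n\ge2$ for $n\ge3$, since $\beta_n(1)>1$), I would assemble three estimates. \textbf{(a)} An upper bound for $a_n$: the midpoint inequality for the convex function $1/x$ gives $\frac1k\le\int_{k-1/2}^{k+1/2}\frac{dx}{x}$, so $1<\beta_n(a_n-1)\le\int_{a_n-3/2}^{\,n-1/2}\frac{dx}{x}=\log\frac{n-1/2}{a_n-3/2}$, whence $a_n<\frac{n-1/2}{e}+\frac32$; since the right side is irrational and $a_n$ is an integer, a short argument splitting on the parity of $n$ sharpens this to $a_n<n/2$ for $n\ge9$, so that $\frac{2n}{a_n}\ge\frac{4n}{n-1}$. \textbf{(b)} A lower bound for $a_n$: from $\frac1k\ge\int_k^{k+1}\frac{dx}{x}$ and $\beta_n(a_n)\le1$ we get $\log(n/a_n)\le1$, i.e.\ $a_n>n/e$; since $n\ge9>\frac{3e}{4-e}$, this forces $a_n-1>\frac{n-1}{4}$, i.e.\ $\frac1{a_n-1}<\frac4{n-1}$. \textbf{(c)} $\beta_n(a_n)=\beta_n(a_n-1)-\frac1{a_n-1}>1-\frac1{a_n-1}$. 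Combining (a)--(c),
\[
\alpha(a_n)=\frac{2n}{a_n}+\beta_n(a_n)>\frac{4n}{n-1}+1-\frac1{a_n-1}=5+\frac4{n-1}-\frac1{a_n-1}>5,
\]
so $r_n\ge a_n+1$ for every $n\ge9$, establishing part~(ii).

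The hard part will be estimate~(a): the bound $a_n<n/2$ is what the whole combination hinges on, and it is tight for small $n$ — indeed $a_8=8/2$, so it fails at $n=8$ and only barely holds at $n=9$ — so the cruder estimate $a_n<\frac{n-1}{e}+2$ does not suffice near the boundary; I expect to need the sharper midpoint form of the integral comparison together with the integrality of $a_n$ and the parity split in order to push the argument down to $n=9$. The remaining ingredients — strict monotonicity of $\alpha$, the two standard integral sandwiches for harmonic sums, the one-line combination above, and the finite computation for $1\le n\le8$ — are routine.
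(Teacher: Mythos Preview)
Your proposal is correct and follows the same overall strategy as the paper: compute the cases $1\le n\le 8$ directly, and for large $n$ show $\alpha(a_n)>5$ using the upper bound $a_n<\frac{n-1/2}{e}+\frac32$ (which you derive via the midpoint inequality; the paper quotes it from Gilbert--Mosteller). The algebraic combinations differ slightly. The paper writes
\[
\alpha(a_n)=\frac{2(n-1)}{a_n}+\frac{2}{a_n}+\sum_{\ell=a_n+1}^{n}\frac{1}{\ell-1}
\ \ge\ \frac{2(n-1)}{a_n}+\sum_{\ell=a_n}^{n}\frac{1}{\ell-1}
\ >\ 4+1,
\]
using only $\tfrac{2}{a_n}\ge\tfrac{1}{a_n-1}$ (from $a_n\ge2$) and $a_n\le(n-1)/2$; no lower bound on $a_n$ is needed. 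Since the upper estimate only forces $a_n\le(n-1)/2$ cleanly for $n\ge10$, the paper then disposes of $n=9$ by computing $a_9=4$, $r_9=5$ directly. Your combination (a)--(c) spends the extra ingredient $a_n>n/e$ but in return covers all $n\ge9$ uniformly. One small remark: the ``parity split'' you mention is not really what rescues $n=9$; what matters is integrality --- the bound gives $a_9<4.63$, hence $a_9\le4=(9-1)/2$, while for $n\ge10$ the bound already lies below $n/2$.
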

\begin{proof}[Proof of Theorem~\ref{thm 3.1} (ii)]
By computing $a_n$ and $r_n$ for $8\geq n \geq 1$, we see that 
\begin{align*}
    a_{2i-1}=a_{2i}=r_{2i-1}=r_{2i}=i
\end{align*}
for $i=1,2,3,4$, except that $a_7=3$ and $r_7=4$.

It is known that $\dfrac{n}{e} < a_n < \dfrac{\,n-\frac12\,}{e}+\dfrac{3}{2}$ (cf.\ Gilbert and Mosteller \cite{Gilbert1966}) and $\dfrac{n-\frac12}{e}+\dfrac{3}{2}\leq \dfrac{n}{2}$ for $n\geq 10$. Therefore, $a_n\leq \dfrac{n-1}{2}$ for $n\geq 10$. When $n\geq 10$, we have 
\begin{align*}
    \alpha(a_n)&=\frac{2n}{a_n}+\sum_{\ell=a_n+1}^n\frac{1}{\ell-1}\\
    &=\frac{2n-2}{a_n}+\frac{2}{a_n}+\sum_{\ell=a_n+1}^n\frac{1}{\ell-1}\\
    &\geq \frac{2n-2}{a_n}+\sum_{\ell=a_n}^n\frac{1}{\ell-1}\\
    &>4+1=5,
\end{align*}
by the definition of $a_n$ and the fact that when $n\geq 10$, we have $a_n\geq 2$ and so $\dfrac{2}{a_n}\geq \dfrac{1}{a_n-1}$. Since $\alpha(a_n)>5$ for $n\geq 10$, we see that $r_n>a_n$ for $n\geq 10$, by the definition of $r_n$. Moreover, it is easy to derive that $a_9=4$ and $r_9=5$. Hence, $r_n>a_n$ for $n\geq 9$. 

The proof is complete.
    
\end{proof}

\section{Proof of Lemma \ref{lem 2.1}} \label{sec4}

\begin{proof}[Proof of Lemma \ref{lem 2.1}]
We first use backward induction on $i$ and $j$ to prove the formula for $A_{i,j}$. Recall that for $j=1,2,\ldots,2n-1, \,j'=\min\{k>j:X_k=1,\,S_k=2\}$. When $i=n$ and $j=2n-1$, 
\begin{align*}
A_{n,2n-1}&=P(T=(2n-1)'\mid D_{2n-1}=n,\, S_{2n-1}=1)\\
&=P_{2n-1,2n}((n,1),(n,2))\\
&=1=\frac{2n+n}{3n}.
\end{align*}
In fact, it is clear that for $2n>j\geq n$,
\begin{align*}
    A_{n,j}&=P(T=j'\mid D_j=n,\,S_j=1)\\
    &=1=\frac{2n+n}{3n}.
\end{align*}
Suppose that for some $i+1\leq n$ and all $2(i+1)>j\geq i+1$, $A_{i+1,j}=(2n+i+1)/(3n)$ hold. Then, by \eqref{2.5},
\begin{align*}
    A_{i,2i-1}=&P(T=(2i-1)'\mid D_{2i-1}=i,\, S_{2i-1}=1)\\
=&P_{2i-1,2i}((i,1),(i,2))\cdot P(X'_j\neq 1\,\text{for all}\, j\geq i+1) \\
&+P_{2i-1,2i}((i,1),(i,1))\cdot P(T=(2i)'\mid D_{2i}=i,\,S_{2i}=1)\\
&+P_{2i-1,2i}((i,1),(i+1,1))\cdot P(T=(2i)'\mid D_{2i}=i+1,\,S_{2i}=1)\\
=&\frac{1}{2n-(2i-1)}\prod_{\ell=i}^{n-1}\frac{\ell}{\ell+1}+0+\frac{2(n-i)}{2n-(2i-1)}\cdot A_{i+1,2i}\\
=&\frac{1}{2n-2i+1}\cdot \frac{i}{n}+\frac{2(n-i)}{2n-2i+1}\cdot \frac{2n+i+1}{3n}\\
=&\frac{2n+i}{3n}.
\end{align*}
Furthermore, suppose $A_{i,j+1}=(2n+i)/(3n)$ holds for some $i\leq j\leq 2i-2$. Then, by \eqref{2.5},
\begin{align*}
    A_{i,j}=&P(T=j'\mid D_j=i,\, S_j=1)\\
=&P_{j,j+1}((i,1),(i,2))\cdot P(X'_j\neq 1\,\text{for all}\, j\geq i+1)\\
&+P_{j,j+1}((i,1),(i,1))\cdot P(T=(j+1)'\mid D_{j+1}=i,\,S_{j+1}=1)\\
&+P_{j,j+1}((i,1),(i+1,1))\cdot P(T=(j+1)'\mid D_{j+1}=i+1,\,S_{j+1}=1)\\
=&\frac{1}{2n-j}\prod_{\ell=i}^{n-1}\frac{\ell}{\ell+1}+\frac{2i-j-1}{2n-j}\cdot A_{i,j+1}+\frac{2(n-i)}{2n-j}\cdot A_{i+1,j+1}\\
=&\frac{1}{2n-j}\cdot \frac{i}{n}+\frac{2i-j-1}{2n-j}\cdot \frac{2n+i}{3n}+\frac{2(n-i)}{2n-j}\cdot\frac{2n+i+1}{3n}\\
=&\frac{2n+i}{3n}.
\end{align*}
The above arguments complete the proof for the formula $A_{i,j}=(2n+i)/(3n)$, $2i>j\geq i$. Next, we again use the backward induction on $i$ and $j$ to prove the formula for $B_{i,j}$. When $i=n$, and $2n\geq j>n$, it is clear that 
\begin{align*} 
    B_{n,j}&=P(T=j'\mid D_j=n,\, S_j=2)\\
    &=0\\
    &=\frac{2(n-n)}{3n}+\frac{n}{3n}\sum_{\ell=n+1}^n\frac{1}{\ell-1}.
\end{align*}
Suppose that for some $i+1\leq n$ and all $2(i+1)\geq j>i+1$, 
\begin{align*}
    B_{i+1,j}=\frac{2(n-i-1)}{3n}+\frac{i+1}{3n}\sum_{\ell=i+2}^{n}\frac{1}{\ell-1}
\end{align*}
hold. Then, by \eqref{2.6}, 
\begin{align*}
    B_{i,2i}=&P(T=(2i)'\mid D_{2i}=i,\,S_{2i}=2)\\
    =&P_{2i,2i+1}((i,2),(i,2))\cdot P(T=(2i+1)'\mid D_{2i+1}=i,\,S_{2i+1}=2)\\
    &+P_{2i,2i+1}((i,2),(i+1,1))\cdot P(T=(2i+1)'\mid D_{2i+1}=i+1,\,S_{2i+1}=1)\\
    &+P_{2i,2i+1}((i,2),(i+1,2))\cdot P(T=(2i+1)'\mid D_{2i+1}=i+1,\,S_{2i+1}=2)\\
    =&0+\frac{2(n-i)}{2n-2i}\cdot\frac{1}{i+1}\cdot A_{i+1,2i+1}+\frac{2(n-i)}{2n-2i}\cdot\frac{i}{i+1}\cdot B_{i+1,2i+1}\\
    =&\frac{1}{i+1}\cdot\frac{2n+i+1}{3n}+\frac{i}{i+1}\cdot\left\{ \frac{2(n-i-1)}{3n}+\frac{i+1}{3n}\sum_{\ell=i+2}^n\frac{1}{\ell-1}\right\}\\
    =&\frac{2(n-i)}{3n}+\frac{i}{3n}\sum_{\ell=i+1}^n\frac{1}{\ell-1}.
\end{align*}
Furthermore, suppose, for some $2i-1\geq j>i-1$, 
\begin{align*}
B_{i,j+1}=\frac{2(n-i)}{3n}+\frac{i}{3n}\sum_{\ell=i+1}^n\frac{1}{\ell-1}    
\end{align*}
holds. Then, by \eqref{2.6},
\begin{align*}
    B_{i,j}=&P(T=j'\mid D_j=i,\,S_j=2)\\
    =&P_{j,j+1}((i,2),(i,2))\cdot P(T=(j+1)'\mid D_{j+1}=i,\,S_{j+1}=2)\\
    &+P_{j,j+1}((i,2),(i+1,1))\cdot P(T=(j+1)'\mid D_{j+1}=i+1,\,S_{j+1}=1)\\
    &+P_{j,j+1}((i,2),(i+1,2))\cdot P(T=(j+1)'\mid D_{j+1}=i+1,\,S_{j+1}=2)\\
    =&\frac{2i-j}{2n-j}\cdot B_{i,j+1}+\frac{2(n-i)}{2n-j}\cdot\frac{1}{i+1}\cdot A_{i+1,j+1}+\frac{2(n-i)}{2n-j}\cdot\frac{i}{i+1}\cdot B_{i+1,j+1}\\
    =&\frac{2i-j}{2n-j}\cdot\left\{\frac{2(n-i)}{3n}+\frac{i}{3n}\sum_{\ell=i+1}^n\frac{1}{\ell-1}\right\}+\frac{2(n-i)}{2n-j}\cdot \frac{1}{i+1}\cdot\frac{2n+i+1}{3n}\\
    &+\frac{2(n-i)}{2n-j}\cdot\frac{i}{i+1}\cdot\left\{\frac{2(n-i-1)}{3n}+\frac{i+1}{3n}\sum_{\ell=i+2}^n\frac{1}{\ell-1}\right\}\\
    =&\frac{2i-j}{2n-j}\cdot\left\{\frac{2(n-i)}{3n}+\frac{i}{3n}\sum_{\ell=i+1}^n\frac{1}{\ell-1}\right\}+\frac{2(n-i)}{2n-j}\cdot \left\{\frac{2(n-i)}{3n}+\frac{i}{3n}\sum_{\ell=i+1}^n\frac{1}{\ell-1}\right\}\\
    =&\frac{2(n-i)}{3n}+\frac{i}{3n}\sum_{\ell=i+1}^n\frac{1}{\ell-1}.
\end{align*}
The above arguments complete the proof for the formula 
\begin{align*}
    B_{i,j}=\frac{2(n-i)}{3n}+\frac{i}{3n}\sum_{\ell=i+1}^n \frac{1}{\ell-1},\qquad 2i\geq j>i. 
\end{align*}
The proof is complete.
\end{proof}




%
%
%

\end{document}